\newcommand{\bn}{\ensuremath{\mathbf n}}
\newcommand{\R}{\ensuremath{\mathbb R}}
\newcommand{\D}{\ensuremath{\mathcal{D}}}
\newcommand{\W}{\ensuremath{\mathcal{W}}}
\DeclareMathOperator{\Id}{Id}
\DeclareMathOperator{\diag}{diag}
\newtheorem{theorem}[equation]{Theorem}
\newtheorem{corollary}[equation]{Corollary}
\newtheorem{proposition}[equation]{Proposition}
\theoremstyle{definition}
\newtheorem{definition}[equation]{Definition}
\numberwithin{equation}{section}
\begin{document}

\title{Multivariate blowup-polynomials of graphs}

\author{Projesh Nath Choudhury}
\address[P.N.~Choudhury]{Indian Institute of Science, Bangalore 560012,
India}
\email{\tt projeshc@iisc.ac.in}

\author{Apoorva Khare}
\address[A.~Khare]{Indian Institute of Science;
Analysis and Probability Research Group; Bangalore 560012, India}
\email{\tt khare@iisc.ac.in}

\date{\today}

\subjclass[2010]{26C10, 05C31 (primary); %
15A15, 05C50 (secondary)}

\keywords{Graph blowup,
blowup-polynomial,
multi-affine polynomial,
real-stable polynomial,
delta-matroid,
adjacency matrix, bipartite graph}

\begin{abstract}
In recent joint work (2021), we introduced a novel multivariate
polynomial attached to every metric space -- in particular, to every
finite simple connected graph $G$ -- and showed it has several attractive
properties. First, it is multi-affine and real-stable (leading to a
hitherto unstudied delta-matroid for each graph $G$). Second, the
polynomial specializes to (a transform of) the characteristic polynomial
$\chi_{D_G}$ of the distance matrix $D_G$; as well as recovers the entire
graph, where $\chi_{D_G}$ cannot do so. Third, the polynomial encodes the
determinants of a family of graphs formed from $G$, called the blowups of
$G$.

In this short note, we exhibit the applicability of these tools and
techniques to other graph-matrices and their characteristic polynomials.
As a particular case, we will see that the adjacency characteristic
polynomial $\chi_{A_G}$ is in fact the shadow of a richer multivariate
blowup-polynomial, which is similarly multi-affine and real-stable.
Moreover, this polynomial encodes not only the aforementioned three
properties, but also yields additional information for specific families
of graphs.
\end{abstract}

\maketitle

\textit{Throughout this work, $G = (V,E)$ denotes a finite simple
connected graph (without self-loops or parallel edges).}

\section{Introduction}

This work provides novel connections between various matrices obtained
from graphs $G$, their spectra, and the geometry of real/complex
polynomials. It is a follow-up to our recent work~\cite{CK-blowup}, where
we were motivated by the problem of co-spectrality for (the
characteristic polynomial of) the distance matrix $D_G$. In that work, we
introduced a novel graph-invariant -- a multi-affine, real-stable
polynomial $p_G(\cdot)$ -- which
(a)~specializes to a transformation of the usual characteristic
polynomial of $D_G$, and
(b)~is able to recover the entire graph $G$ up to isometry, where the
univariate characteristic polynomial does not. Additionally,
(c)~$p_G(\cdot)$ encodes the determinant of $D_{G'}$ for $G'$ every
possible ``blowup'' of $G$:

\begin{definition}
Given a finite simple graph $G = (V,E)$, and a tuple $\bn = (n_v : v \in
V)$ of positive integers, the \emph{$\bn$-blowup} of $G$ is defined to be
the graph $G[\bn]$ -- with $n_v$ copies of each vertex $v$ -- such that a
copy of $v$ and one of $w$ are adjacent in $G[\bn]$ if and only if $v
\neq w$ are adjacent in $G$. (Blowups have previously been studied in the
context of extremal graph theory among other areas; see e.g.\
\cite{HHN,KSS}.)
\end{definition}

In this short note, we apply the tools and techniques developed
in~\cite{CK-blowup} to explore other well-known matrices in spectral
graph theory (and their corresponding characteristic polynomials). For
each graph, we will introduce a multivariate polynomial for each matrix
in a certain class, and prove that this polynomial has similarly
attractive properties as in~\cite{CK-blowup} (and mentioned above). As a
prototypical example, we will briefly discuss the adjacency matrix later
in this note.

We now introduce the matrices of interest. Given a graph $G = (V,E)$, we
study matrices of the form
\begin{equation}\label{Estart}
\mathcal{A}_G := \Delta_G + M_G,
\end{equation}
where:
\begin{itemize}
\item $M_G = (m_{vw})_{v,w \in V} \in \R^{V \times V}$ is a real
symmetric matrix (which encodes a graph-property) that is
``well-behaved'' under blowups, in that $M_{G[\bn]}$ is a block $V \times
V$ matrix, with the $(v,w)$ block equal to $m_{vw} {\bf 1}_{n_v \times
n_w}$.

\item $\Delta_G = \diag ( f_v )_{v \in V}$ is a nonsingular diagonal
matrix that is well-behaved under blowups, in that $\Delta_{G[\bn]}$ is a
$V \times V$ block diagonal matrix, with the $(v,v)$ block equal to $f_v
\Id_{n_v}$.
\end{itemize}

Here are a few examples of well-known matrices that are subsumed by this
paradigm:
\begin{enumerate}
\item $G$ is a finite simple connected graph, with $M_G = \D_G$ the
modified distance matrix studied in~\cite{CK-blowup}; and $\Delta_G = -2
\Id_V$. Thus $\mathcal{A}_G = \D_G - 2 \Id_V$ is precisely the `usual'
distance matrix $D_G = (d(v,w))_{v,w \in V}$, where $d(v,w)$ denotes the
(edge-)length of a shortest path joining $v,w$ in $G$. In fact the
framework for an arbitrary finite metric space $X$ is also subsumed by
the present model: $M_X = \D_X$ and $\Delta_X = \diag( -2 d_X(x, X
\setminus \{ x \}))_{x \in X}$, where $d_X(x, Y) := \min_{y \in Y}
d_X(x,y)$ for a non-empty subset $Y \subset X$.

\item $G$ is a finite simple graph, and $M_G = A_G$ is its adjacency
matrix. In this case, we fix a (nonzero) scalar $\lambda \in \R$ and let
$\Delta_G = \lambda \Id_V$.

\item $G$ is a finite simple graph, and $\mathcal{A}_G$ is the
\textit{Seidel matrix}, also studied in spectral graph theory. In this
case, $M_G = {\bf 1}_{V \times V} - 2 A_G$, and $\Delta_G = -\Id_V$.
\end{enumerate}

As in~\cite{CK-blowup}, we are motivated by the well-studied problem of
\textit{co-spectrality} with respect to a graph-matrix (e.g.\ the
distance matrix, or the adjacency/Seidel matrix). Recall that two graphs
$G \not\cong H$ are said to be co-spectral with respect to a graph-matrix
$M$ if the spectra of $M_G$ and $M_H$ agree as multi-sets (equivalently,
the characteristic polynomials of $M_G, M_H$ agree). Thus, a longstanding
problem in spectral graph theory is to understand, for a given
graph-matrix $M$, which pairs of non-isomorphic graphs are
$M$-cospectral.
In particular, it is well-known that all three matrices above admit such
graph pairs. In other words, none of these matrices $M_G$
\textit{detects} the underlying graph $G$ -- i.e., recovers the graph up
to isomorphism. See Figure~\ref{Fig1} for `small graph' examples for the
adjacency and Seidel matrices, and~\cite{DL} for an example for distance
matrices. (For completeness, we also refer the reader to the
texts~\cite{CDS,CRS} on spectral graph theory.)

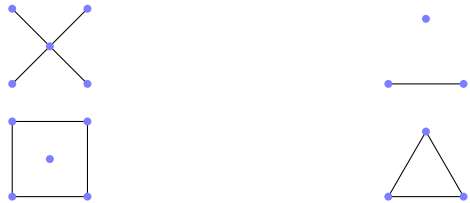
\begin{figure}[ht]
\definecolor{xdxdff}{rgb}{0.49,0.49,1}
\begin{tikzpicture}
\draw (0,0)-- (0,1);
\draw (0,0)-- (1,0);
\draw (1,0)-- (1,1);
\draw (0,1)-- (1,1);
\fill [color=xdxdff] (0,0) circle (1.5pt);
\fill [color=xdxdff] (0,1) circle (1.5pt);
\fill [color=xdxdff] (0.5,0.5) circle (1.5pt);
\fill [color=xdxdff] (1,0) circle (1.5pt);
\fill [color=xdxdff] (1,1) circle (1.5pt);
\draw (0,1.5)-- (0.5,2);
\draw (0,2.5)-- (0.5,2);
\draw (1,1.5)-- (0.5,2);
\draw (1,2.5)-- (0.5,2);
\fill [color=xdxdff] (0,1.5) circle (1.5pt);
\fill [color=xdxdff] (0,2.5) circle (1.5pt);
\fill [color=xdxdff] (0.5,2) circle (1.5pt);
\fill [color=xdxdff] (1,1.5) circle (1.5pt);
\fill [color=xdxdff] (1,2.5) circle (1.5pt);
\draw (5,0)-- (6,0);
\draw (5,0)-- (5.5,0.866);
\draw (6,0)-- (5.5,0.866);
\fill [color=xdxdff] (5,0) circle (1.5pt);
\fill [color=xdxdff] (6,0) circle (1.5pt);
\fill [color=xdxdff] (5.5,0.866) circle (1.5pt);
\draw (5,1.5)-- (6,1.5);
\fill [color=xdxdff] (5,1.5) circle (1.5pt);
\fill [color=xdxdff] (6,1.5) circle (1.5pt);
\fill [color=xdxdff] (5.5,2.366) circle (1.5pt);
\end{tikzpicture}
\caption{Two non-isomorphic graphs on five vertices that are adjacency
co-spectral; and two non-isomorphic graphs on three vertices that are
Seidel co-spectral}\label{Fig1}
\end{figure}

The purpose of this note is to show that in such cases, one can
nevertheless refine each such univariate polynomial -- in a natural
manner from multiple viewpoints: algebraic, spectral, and polynomial --
to obtain a multivariate real polynomial with several interesting
properties, listed presently. For instance, if $M_G$ is the (modified)
distance matrix or the adjacency matrix, then this polynomial does
recover the graph, hence is truly a graph-invariant. This yields a novel
family of multivariate graph-invariants for every graph $G = (V,E)$,
which we will term $p_{\mathcal{A}_G}(\cdot)$ -- see~\eqref{Estart}.
(This was carried out for the distance matrix in previous
work~\cite{CK-blowup}; we now provide a general model that works for all
graph-matrices $\mathcal{A}_G$ as in~\eqref{Estart}.) We also show that
in addition to being polynomials, these invariants
$p_{\mathcal{A}_G}(\cdot)$ have other attractive properties:
\begin{itemize}
\item They are multi-affine in their arguments $n_v, \ v \in V$.

\item They are \textit{real-stable} in the $n_v$.

\item The univariate specialization of $p_{\mathcal{A}_G}(\cdot)$ is
closely related to the characteristic polynomial of $M_G$, so that
$p_{\mathcal{A}_G}(\cdot)$ is indeed related to the spectrum of $M_G$.
Combined with the preceding point, this implies that the univariate
specialization is also real-rooted.

\item Unlike their univariate specializations (at least for the
distance/adjacency/Seidel matrices), the polynomials $p_{\mathcal{A}_G}$
recover the matrix $M_G^{\circ 2}$, whence $M_G$ if $M_G$ has
non-negative entries.

\item The polynomials $p_{\mathcal{A}_G}$ simultaneously encode the
determinants of the corresponding matrices $\mathcal{A}_{G[\bn]}$ for
\textit{all} graph-blowups (defined above) of $G$. Thus, in addition to
our original, `spectral' motivation, these polynomials also carry
algebraic information.
\end{itemize}

For the last-mentioned reason, we continue to adopt the notation
in~\cite{CK-blowup}, and call this object the
\textit{$\mathcal{A}$-blowup-polynomial of $G$}.
These polynomials are desirable in other ways as well. For example, it is
well-known that the (adjacency) spectrum of a bipartite graph $B = (V,E)$
is always symmetric about the origin, as a multi-set. We show in this
short note that the more general fact that the zero-locus of the
corresponding adjacency blowup-polynomial of $B$ is also symmetric around
the origin in $\mathbb{C}^V$.
Thus, the workings of~\cite{CK-blowup} (and now of this
paper) provide a broader recipe for a more refined study of
graph-polynomials.
We expect this line of investigation to lead to further multivariate
refinements of known results.

In a sense, this short note conforms to the philosophy that univariate
polynomials are special cases of multivariate ones, and these latter are
the more natural and general objects to study -- and they are more
powerful too. A famous recent manifestation of this has been in the
geometry of (the roots in $\mathbb{C}$ of) real and complex univariate
polynomials, where Borcea--Br\"and\'en and other mathematicians have
recently been extremely active in advancing the field (to cite a very few
sources, \cite{BB1,BB2,BB3,MSS1,MSS2,Wagner2}), a century after the
activity on the Laguerre--P\'olya--Schur program
\cite{Laguerre1,Polya1913,Polya-Schur}.
This recent progress has extensively advanced the theory of (real) stable
polynomials, with numerous applications including to negative dependence,
constructing bipartite Ramanujan graphs, and the Kadison--Singer problem.
Additionally, there are other several other examples. For instance,
Wagner's involved proof of the univariate Brown--Colbourn
conjecture~\cite{Wagner} was shortly followed by a one-paragraph proof of
its multivariate strengthening~\cite{Royle-Sokal,Sokal1}. Similarly,
multivariate versions \cite{Lieb-Sokal} of Lee--Yang type theorems for
Ising models, have been very influential in e.g.\ the Borcea--Br\"and\'en
program. We refer the reader to the excellent survey of
Sokal~\cite{Sokal2} for more instances and details.

\section{Algebraic results: polynomiality, coefficients, iterated
blowup, symmetries}

We now state and prove our results. The first set of assertions is
algebraic in nature.

\begin{theorem}\label{Tgeneral}
We retain the notation in~\eqref{Estart} and the lines immediately
thereafter. Thus $M_G = M_G^T$, and $f_v \in \R \setminus \{ 0 \}$ for
all $v \in V$.
\begin{enumerate}
\item There exists a polynomial $p_{\mathcal{A}_G} : \R^V \to \R$ such
that for all integer tuples $\bn \in \mathbb{Z}_{>0}^V$, we have:
\[
\det \mathcal{A}_{G[\bn]} = \prod_{v \in V} f_v^{n_v - 1} \cdot
p_{\mathcal{A}_G}(\bn), \qquad \forall \bn \in \mathbb{Z}_{>0}^V.
\]
In fact, the polynomial is (unique, and) given by:
\[
p_{\mathcal{A}_G}(\bn) = \det( \Delta_G + \Delta_\bn M_G), \qquad
\text{where} \qquad \Delta_\bn := \diag (n_v)_{v \in V}.
\]
In particular, $p_{\mathcal{A}_G}$ is multi-affine in $\bn$, with
constant and linear terms respectively equal to
\[
p_{\mathcal{A}_G}({\bf 0}) = \det \Delta_G = \prod_{v \in V} f_v \qquad
\text{and} \qquad 
(1,\dots,1) \cdot \Delta_\bn \cdot \nabla p_{\mathcal{A}_G}({\bf 0}) =
p_{\mathcal{A}_G}({\bf 0}) \sum_{v \in V} \frac{m_{vv}}{f_v} n_v.
\]

\item More generally, for each $I \subset V$ the coefficient in
$p_{\mathcal{A}_G}(\bn)$ of $\prod_{v \in I} n_v$ equals
\[
\det (M_G)_{I \times I} \prod_{v \in V \setminus I} f_v.
\]

\item The $\mathcal{A}$-blowup-polynomial of the blowup $G[\bn]$ has a
closed-form expression. More precisely, given an integer tuple $\bn \in
\mathbb{Z}_{>0}^V$ and variables $\{ m_{vi} : v \in V, \ 1 \leq i \leq
n_v \}$, define $n'_v := \sum_i m_{vi}$ and $\bn' := (n'_v)_{v \in V}$.
Then:
\[
p_{\mathcal{A}_{G[\bn]}}({\bf m}) \equiv 
p_{\mathcal{A}_G}({\bf n}') \prod_{v \in V} f_v^{n_v - 1}.
\]
\end{enumerate}
\end{theorem}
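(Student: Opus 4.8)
The plan is to exploit the \emph{associativity of blowups}: performing an $\bn$-blowup and then an $\mathbf{m}$-blowup should reproduce a single $\bn'$-blowup. Both sides of the asserted identity are polynomials in the variables $\mathbf{m} = (m_{vi})$---the left side by part~(1) applied to the graph $G[\bn]$, and the right side because each $n'_v = \sum_i m_{vi}$ is linear in $\mathbf{m}$ and $p_{\mathcal{A}_G}$ is a polynomial (while $\prod_{v \in V} f_v^{n_v-1}$ is a constant, since $\bn$ is fixed). Hence it suffices to prove the identity for all positive-integer tuples $\mathbf{m}$; agreement on this Zariski-dense set forces agreement of the polynomials.

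The key step is a matrix-level identity: for positive integers $\mathbf{m}$, I claim that $\mathcal{A}_{(G[\bn])[\mathbf{m}]}$ coincides, up to a simultaneous permutation of rows and columns, with $\mathcal{A}_{G[\bn']}$. The vertices of $(G[\bn])[\mathbf{m}]$ are triples $((v,i),k)$ with $v \in V$, $1 \le i \le n_v$, and $1 \le k \le m_{vi}$; grouping them by the first coordinate $v$ produces $n'_v = \sum_i m_{vi}$ vertices of ``type $v$''. I would then verify the three ingredients. \emph{Adjacency}: two further copies $((v,i),k)$ and $((w,j),\ell)$ are adjacent iff $(v,i) \ne (w,j)$ are adjacent in $G[\bn]$, i.e.\ iff $v \ne w$ are adjacent in $G$---exactly the adjacency pattern of $G[\bn']$. \emph{Off-diagonal entries}: well-behavedness gives $(M_{G[\bn]})_{(v,i),(w,j)} = m_{vw}$ independently of $i,j$, so the $((v,i),(w,j))$-block of $M_{(G[\bn])[\mathbf{m}]}$ is $m_{vw}\,\mathbf{1}_{m_{vi} \times m_{wj}}$; assembling all blocks with first coordinates $v$ and $w$ gives the constant block $m_{vw}\,\mathbf{1}_{n'_v \times n'_w}$, which is the $(v,w)$-block of $M_{G[\bn']}$. \emph{Diagonal}: $\Delta_{(G[\bn])[\mathbf{m}]}$ is block-diagonal with each $((v,i),(v,i))$-block equal to $f_v \Id_{m_{vi}}$, and these assemble to $f_v \Id_{n'_v}$, the $(v,v)$-block of $\Delta_{G[\bn']}$. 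Therefore the two matrices are permutation-similar, and in particular have equal determinant.

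Finally, I would take determinants and invoke part~(1) twice. Applied to $G[\bn]$ blown up by $\mathbf{m}$ (using $f_{(v,i)} = f_v$ and $\sum_i (m_{vi}-1) = n'_v - n_v$), it gives $\det \mathcal{A}_{(G[\bn])[\mathbf{m}]} = \big(\prod_{v \in V} f_v^{n'_v - n_v}\big)\, p_{\mathcal{A}_{G[\bn]}}(\mathbf{m})$; applied to $G$ blown up by $\bn'$, it gives $\det \mathcal{A}_{G[\bn']} = \prod_{v \in V} f_v^{n'_v - 1}\, p_{\mathcal{A}_G}(\bn')$. Equating the two determinants and cancelling the nonzero factor $\prod_{v \in V} f_v^{n'_v - n_v}$ yields $p_{\mathcal{A}_{G[\bn]}}(\mathbf{m}) = \prod_{v \in V} f_v^{n_v - 1}\, p_{\mathcal{A}_G}(\bn')$, as desired. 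I expect the main obstacle to be the second step: the three-index bookkeeping of the triples $((v,i),k)$ and the verification that the inflated blocks reassemble correctly; by comparison, the final cancellation of powers of $f_v$ is routine. One may also bypass the reduction to integers: by the closed form in part~(1), write $p_{\mathcal{A}_{G[\bn]}}(\mathbf{m}) = \det(\Delta_{G[\bn]} + \Delta_{\mathbf{m}} M_{G[\bn]})$, factor $M_{G[\bn]} = U M_G U^T$ for the $0/1$ inflation matrix $U$ with $U_{(v,i),w} = \delta_{vw}$, and apply the Sylvester identity $\det(I + XY) = \det(I + YX)$ to collapse the large determinant to the $|V| \times |V|$ determinant $\det(\Delta_G + \Delta_{\bn'} M_G) = p_{\mathcal{A}_G}(\bn')$, recovering the same scalar $\prod_{v \in V} f_v^{n_v-1}$ after simplification.
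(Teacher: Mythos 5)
Your proposal proves only part~(3) of the theorem, and it does so by \emph{assuming} part~(1): you invoke part~(1) repeatedly (to know that $p_{\mathcal{A}_{G[\bn]}}$ is a polynomial in $\mathbf{m}$, and to convert both $\det \mathcal{A}_{(G[\bn])[\mathbf{m}]}$ and $\det \mathcal{A}_{G[\bn']}$ into blowup-polynomials), but nowhere do you establish part~(1) itself -- the existence, uniqueness, and closed form $p_{\mathcal{A}_G}(\bn) = \det(\Delta_G + \Delta_\bn M_G)$, with its constant and linear terms -- nor part~(2), the coefficient formula $\det(M_G)_{I \times I} \prod_{v \in V \setminus I} f_v$. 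Since the statement comprises all three parts and part~(1) is the engine of the whole theorem, this is a genuine gap. The paper proves part~(1) by first reducing to invertible $M_G$ via an $\epsilon \Id_V$-perturbation and continuity, then expanding $\det(\Delta_{G[\bn]} + \W M_G \W^T)$ twice by Schur complements using $\W^T \Delta_{G[\bn]}^{-1} \W = \diag(n_v f_v^{-1})_{v \in V}$, with uniqueness from Zariski density of $\mathbb{Z}_{>0}^V$ in $\R^V$; part~(2) then follows by multilinearity of the determinant in the rows of $\Delta_G + \Delta_\bn M_G$. Notably, your closing remark contains exactly the tool needed to close the gap: applying $\det(\Id + XY) = \det(\Id + YX)$ to $\det \mathcal{A}_{G[\bn]} = \det(\Delta_{G[\bn]}) \det(\Id_K + \Delta_{G[\bn]}^{-1} \W M_G \W^T)$ (where $K = \sum_{v} n_v$) collapses the $K \times K$ determinant to $\prod_v f_v^{n_v - 1} \det(\Delta_G + \Delta_\bn M_G)$ with no invertibility hypothesis on $M_G$ at all -- arguably cleaner than the paper's Schur-complement-plus-perturbation route -- after which part~(2) is a one-line expansion.

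Taken on its own terms, your argument for part~(3) is correct and follows a genuinely different route from the paper's. The paper computes directly with real variables: $p_{\mathcal{A}_{G[\bn]}}(\mathbf{m}) = \det(\Delta_{G[\bn]} + \Delta_{\mathbf{m}} \W M_G \W^T)$ is reduced by Schur complements (again assuming $M_G$ invertible) via $\W^T \Delta_{\mathbf{m}} \Delta_{G[\bn]}^{-1} \W = \diag(f_v^{-1} n'_v)_{v \in V}$, treating the $m_{vi}$ as formal arguments throughout. You instead argue combinatorially: $(G[\bn])[\mathbf{m}] \cong G[\bn']$ for integer tuples $\mathbf{m}$ -- and your triple-index bookkeeping correctly verifies both the graph isomorphism and that the inflated $M$- and $\Delta$-blocks reassemble, which is precisely what legitimizes applying part~(1) with base graph $G[\bn]$, since every blowup of $G[\bn]$ is again a blowup of $G$ -- then you equate determinants, cancel the nonzero factor $\prod_v f_v^{n'_v - n_v}$ (this exponent arithmetic checks out), and pass from $\mathbf{m} \in \mathbb{Z}_{>0}$ to all real $\mathbf{m}$ by Zariski density. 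This is essentially the strategy of the precursor~\cite{CK-blowup}, which the present paper deliberately set out to avoid (``we avoid Zariski density arguments as in~\cite{CK-blowup}''). Your route buys conceptual transparency -- the associativity of blowups does all the work -- at the cost of the density step; the paper's route, and your Sylvester variant, stay purely within linear algebra and hold verbatim for real arguments. So: fix the proposal by proving parts~(1) and~(2), for which your own Sylvester identity suffices.
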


As an immediate consequence of the final part, the polynomials
$p_{\mathcal{A}_{G[\bn]}}(\cdot)$ all have total degree at most $|V|$,
regardless of the integer tuple $\bn$.

\begin{proof}
The second part follows easily from the first (see e.g.\ \cite[Section
2]{CK-blowup}). The first part was shown in \textit{loc.\ cit.}\ using
arguments from commutative algebra (specifically, Zariski density), and
these arguments also apply here, so that the result holds over an
arbitrary unital commutative ring. In the interest of variance, we now
provide an alternate, more direct proof using real numbers. Begin by
noticing that in the desired assertion
\[
\det \mathcal{A}_{G[\bn]} = \prod_{v \in V} f_v^{n_v - 1} 
\det( \Delta_G + \Delta_\bn M_G),
\]
both sides are polynomial functions, whence continuous, in the entries of
(the real symmetric matrix) $M_G$. In particular, if $M_G$ is singular,
we may replace it by $M_G + \epsilon \Id_V$ for small $\epsilon > 0$, and
let $\epsilon \to 0^+$. Thus, without loss of generality, we may assume
$\det M_G \neq 0$.

We now proceed to the proof. Define the integers $0 < k \leq K$ and the
matrix $\W$ via:
\[
k := |V|, \qquad K := \sum_{v \in V} n_v, \qquad
\W_{K \times k} := \begin{pmatrix} {\bf 1}_{n_1 \times 1} & 0_{n_1
\times 1} & \cdots & 0_{n_1 \times 1} \\
0_{n_2 \times 1} & {\bf 1}_{n_2 \times 1} & \cdots & 0_{n_2 \times 1} \\
\vdots & \vdots & \ddots & \vdots \\
0_{n_k \times 1} & 0_{n_k \times 1} & \cdots & {\bf 1}_{n_k \times 1}
\end{pmatrix},
\]
where $(n_1, \dots, n_k)$ is a fixed enumeration of the integers $n_v$.
Now compute, with a repeated use of Schur complements, and using that
$\Delta_G = \diag (f_v)_{v \in V}$ is invertible:
\begin{align*}
\det \mathcal{A}_{G[\bn]} = &\ \det (\Delta_{G[\bn]} + \W M_G \W^T) =
\det \begin{pmatrix} \Delta_{G[\bn]} & - \W \\ \W^T &
M_G^{-1} \end{pmatrix} \det(M_G) \\
= &\ \det (\Delta_{G[\bn]}) \det (M_G^{-1} + \W^T \Delta_{G[\bn]}^{-1}
\W) \det (M_G),
\end{align*}
where we label $\Delta_G, M_G$ compatibly with the enumeration
$(n_j)_{j=1}^k$. Now since $\W^T \Delta_{G[\bn]}^{-1} \W = \diag(n_v
f_v^{-1})_{v \in V}$, we continue:
\begin{align*}
= &\ \prod_{v \in V} f_v^{n_v} \cdot \det( \Id_V + \Delta_G^{-1}
\Delta_\bn M_G) = \prod_{v \in V} f_v^{n_v - 1} \cdot \det (\Delta_G +
\Delta_\bn M_G).
\end{align*}

This proves the first part, except for the uniqueness of the polynomial
-- but this follows from the Zariski density of $\mathbb{Z}_{>0}^V$ in
$\R^V$, which simply means that any polynomial on $\R^V$ that vanishes on
$\mathbb{Z}_{>0}^V$ is identically zero. (There is some more work to do
if one wants to prove this result over an arbitrary commutative ring, as
was done in \cite{CK-blowup}.)

It remains to show the third part. Once again, we avoid Zariski density
arguments as in~\cite{CK-blowup}, and work with the real symmetric matrix
$M_G$. As above, we may assume $M_G$ is invertible (as is $\Delta_G$). We
compute using the first part, and Schur complements:
\begin{align*}
p_{\mathcal{A}_{G[\bn]}}({\bf m}) = &\ \det (\Delta_{G[\bn]} +
\Delta_{\bf m} \W M_G \W^T)\\
= &\ \det(\Delta_{\bf m}) \det (\Delta_{\bf m}^{-1} \Delta_{G[\bn]} + \W
M_G \W^T)\\
= &\ \det(\Delta_{\bf m}) \det \begin{pmatrix} \Delta_{\bf m}^{-1}
\Delta_{G[\bn]} & - \W \\ \W^T & M_G^{-1} \end{pmatrix} \det(M_G)\\
= &\ \det (\Delta_{\bf m}) \det (\Delta^{-1}_{\bf m}) \det
(\Delta_{G[\bn]}) \det (M_G^{-1} + \W^T \Delta_{\bf m}
\Delta_{G[\bn]}^{-1} \W) \det(M_G).
\end{align*}

But $\W^T \Delta_{\bf m} \Delta_{G[\bn]}^{-1} \W = \diag (f_v^{-1}
n'_v)_{v \in V}$, where $n'_v := \sum_i m_{vi}$ as above. Hence we
continue:
\begin{align*}
= \prod_{v \in V} f_v^{n_v} \cdot \det( M_G^{-1} + \Delta_G^{-1}
\Delta_{\bn'}) \det (M_G) = \prod_{v \in V} f_v^{n_v -1} \cdot \det(
\Delta_G + \Delta_{\bn'} M_G),
\end{align*}
which proves the third part.
\end{proof}

A consequence of the preceding result is a linear delta-matroid that
arises from the $\mathcal{A}$-blowup-polynomial:

\begin{corollary}\label{Cdelta}
Setting as in Theorem~\ref{Tgeneral}. The set of monomials with nonzero
coefficients in $p_{\mathcal{A}_G}(\bn)$ forms the linear delta-matroid
$\mathcal{M}_{M_G}$.
\end{corollary}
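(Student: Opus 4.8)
The plan is to read off the support of $p_{\mathcal{A}_G}$ directly from Theorem~\ref{Tgeneral}(2), and then recognize it as the feasible-set family of the delta-matroid linearly represented by the symmetric matrix $M_G$. By Theorem~\ref{Tgeneral}(1) the polynomial $p_{\mathcal{A}_G}$ is multi-affine, so its monomials are exactly the square-free monomials $\prod_{v \in I} n_v$ indexed by subsets $I \subseteq V$ (with $I = \emptyset$ giving the constant term). By Theorem~\ref{Tgeneral}(2) the coefficient of $\prod_{v \in I} n_v$ equals $\det (M_G)_{I \times I} \prod_{v \in V \setminus I} f_v$, and since $f_v \neq 0$ for every $v$ by hypothesis, this coefficient is nonzero precisely when the principal minor $\det (M_G)_{I \times I}$ is nonzero. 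Adopting the convention $\det (M_G)_{\emptyset \times \emptyset} = 1$ (consistent with the nonzero constant term $\prod_v f_v$), the support is therefore
\[
\mathcal{F} := \{ I \subseteq V : \det (M_G)_{I \times I} \neq 0 \},
\]
a nonempty family, as $\emptyset \in \mathcal{F}$.

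It then remains to verify that $\mathcal{F}$ satisfies the symmetric exchange axiom defining a delta-matroid on $V$: for all $F_1, F_2 \in \mathcal{F}$ and every $x \in F_1 \triangle F_2$, there exists $y \in F_1 \triangle F_2$ (possibly $y = x$) with $F_1 \triangle \{ x, y \} \in \mathcal{F}$. This is exactly the statement that $M_G$ represents a delta-matroid $\mathcal{M}_{M_G}$. My approach is a principal-pivot reduction. For any $I \in \mathcal{F}$ the block $P := (M_G)_{I \times I}$ is invertible, so one may form the principal pivot transform $M_G * I$, taken in the sign convention under which a symmetric matrix pivots to a symmetric matrix; its lower-right block is the Schur complement $S - Q^T P^{-1} Q$. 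The fundamental minor identity for pivoting gives, for every $J \subseteq V$,
\[
\det \big( (M_G * I)_{J \times J} \big) = \pm \, \frac{\det (M_G)_{(I \triangle J) \times (I \triangle J)}}{\det (M_G)_{I \times I}},
\]
whence $J \in \mathcal{F}(M_G * I)$ if and only if $I \triangle J \in \mathcal{F}$; that is, $\mathcal{F}(M_G * I) = I \triangle \mathcal{F}$.

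With this in hand the exchange axiom follows by reduction to the base case $F_1 = \emptyset$. Given $F_1, F_2 \in \mathcal{F}$, pivot on $F_1$ to pass to the symmetric matrix $M_G * F_1$, under which the pair $(F_1, F_2)$ corresponds to the pair $(\emptyset, F_1 \triangle F_2)$ of feasible sets of $M_G * F_1$. For the base case take a nonempty $F \in \mathcal{F}$ and fix $x \in F$: since $(M_G)_{F \times F}$ is nonsingular, its $x$-th row is not identically zero. If $m_{xx} \neq 0$ then $\{ x \} \in \mathcal{F}$ and we take $y = x$; otherwise $m_{xy} \neq 0$ for some $y \in F \setminus \{ x \}$, and by symmetry $\det (M_G)_{\{ x, y \} \times \{ x, y \}} = - m_{xy}^2 \neq 0$, so $\{ x, y \} = \emptyset \triangle \{ x, y \} \in \mathcal{F}$. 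Transporting this back through the pivot (using $\mathcal{F}(M_G * F_1) = F_1 \triangle \mathcal{F}$) yields the required $y$ with $F_1 \triangle \{ x, y \} \in \mathcal{F}$, completing the exchange axiom and hence identifying $\mathcal{F}$ with the feasible sets of $\mathcal{M}_{M_G}$.

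The main obstacle is the pivot step of the second paragraph: establishing the displayed minor identity for the principal pivot transform, and checking that the symmetry of $M_G$ is preserved by the (sign-corrected) pivot, so that the reduction remains within the class of symmetric matrices and the base-case argument applies verbatim. Both are classical facts from the theory of principal pivot transforms and of linearly representable delta-matroids (Bouchet); I would either invoke them directly, or, for self-containedness, record the blockwise formula for $M_G * I$ and derive the minor identity from the Jacobi/Schur complement determinant relation. Everything else — the passage from coefficients to the support $\mathcal{F}$, and the elementary base case — is immediate.
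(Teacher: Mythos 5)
Your proposal is correct, and its first paragraph is exactly the paper's argument: by Theorem~\ref{Tgeneral}(2) the coefficient of $\prod_{v \in I} n_v$ is $\det (M_G)_{I \times I} \prod_{v \in V \setminus I} f_v$, and since every $f_v \neq 0$, the support of $p_{\mathcal{A}_G}$ is precisely $\{ I \subseteq V : \det (M_G)_{I \times I} \neq 0 \}$ (with the convention that the empty minor is $1$, matching the nonzero constant term $\prod_v f_v$). Where you diverge is in what happens next: the paper stops at this point and invokes Bouchet's representability theorem from~\cite{Bouchet2} as a black box --- that the nonvanishing principal minors of a symmetric matrix over a field form a linear delta-matroid $\mathcal{M}_{M_G}$ --- whereas you re-derive that theorem via principal pivot transforms: the sign-corrected pivot $M_G * F_1$ (negating the rows indexed by $F_1$ to restore symmetry, which only changes principal minors by signs), Tucker's minor identity $\det\bigl((M_G * I)_{J \times J}\bigr) = \pm \det (M_G)_{(I \triangle J) \times (I \triangle J)} / \det (M_G)_{I \times I}$ giving $\mathcal{F}(M_G * I) = I \triangle \mathcal{F}$, and then the elementary base case $F_1 = \emptyset$ (a nonzero row of a nonsingular block yields either $m_{xx} \neq 0$, so $\{x\} \in \mathcal{F}$, or $m_{xy} \neq 0$ with $\det (M_G)_{\{x,y\} \times \{x,y\}} = -m_{xy}^2 \neq 0$). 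Your pivot reduction is sound --- the transport of the exchange axiom through $\mathcal{F} \mapsto F_1 \triangle \mathcal{F}$ is exactly Bouchet's mechanism --- and the two facts you flag (symmetry preservation under the sign-corrected pivot, and the minor identity) are indeed the classical inputs. What your route buys is self-containedness: the corollary no longer rests on an external citation. What the paper's route buys is brevity and an accurate attribution of the real content of the statement, which lies entirely in Bouchet's theorem rather than in anything specific to blowup-polynomials.
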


Recall that delta-matroids were defined and studied by
Bouchet~\cite{Bouchet1}, and generalize the notion of a matroid. Given a
symmetric matrix $M$ over a field, Bouchet showed in~\cite{Bouchet2} that
the subsets of indices corresponding to non-vanishing principal minors,
form a (linear) delta-matroid $\mathcal{M}_M$. This explains how the
corollary follows immediately from Theorem~\ref{Tgeneral}(2).

The next result shows that the $\mathcal{A}$-blowup-polynomial
$p_{\mathcal{A}_G}$, together with the scalars $f_v, m_{vv}$, determine
the rest of the matrix $M_G$ -- or more precisely, its Hadamard square
$M_G^{\circ 2} = (m_{ij}^2)$.

\begin{proposition}\label{Precover}
The homogeneous quadratic part of $p_{\mathcal{A}_G}$, i.e.\ its Hessian
at the origin, equals
\[
\mathcal{H}(p_{\mathcal{A}_G}) := ((\partial_{n_v} \partial_{n_{v'}}
p_{\mathcal{A}_G}))_{v,v' \in V} |_{\bn = {\bf 0}_V} = \prod_{v \in V}
f_v \cdot \Delta_G^{-1} ({\bf m}_G {\bf m}_G^T - M_G^{\circ 2})
\Delta_G^{-1},
\]
where ${\bf m}_G := (m_{vv})_{v \in V} \in \R^V$ is the (column) vector
containing the diagonal entries of $M_G$. Thus if the scalars $f_v,
m_{vv}$ are known, then the $\mathcal{A}$-blowup-polynomial
$p_{\mathcal{A}_G}(\bn)$ detects the matrix $M_G^{\circ 2}$.
\end{proposition}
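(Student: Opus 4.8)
The plan is to read the Hessian directly off the multi-affine expansion of $p_{\mathcal{A}_G}$ furnished by Theorem~\ref{Tgeneral}. Since $p_{\mathcal{A}_G}$ is multi-affine by part~(1), it has degree at most one in each variable $n_v$; consequently every pure second derivative $\partial_{n_v}^2 p_{\mathcal{A}_G}$ vanishes identically, so the diagonal of $\mathcal{H}(p_{\mathcal{A}_G})$ is zero. Moreover the full Hessian is a constant matrix, independent of $\bn$, so evaluating at the origin is harmless and the off-diagonal entries are governed entirely by the coefficients of the degree-two monomials $n_v n_{v'}$ with $v \neq v'$, which part~(2) identifies explicitly.

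Concretely, for $v \neq v'$ the coefficient of $n_v n_{v'}$ in $p_{\mathcal{A}_G}(\bn)$ equals $\det (M_G)_{\{v,v'\} \times \{v,v'\}} \prod_{w \in V \setminus \{v,v'\}} f_w$. Differentiating the monomial $n_v n_{v'}$ once in each of the two distinct variables returns the coefficient with no extra factor of two, so
\[
(\mathcal{H}(p_{\mathcal{A}_G}))_{v,v'} = \det (M_G)_{\{v,v'\} \times \{v,v'\}} \prod_{w \neq v,v'} f_w = (m_{vv} m_{v'v'} - m_{vv'}^2) \prod_{w \neq v,v'} f_w,
\]
where the final equality expands the $2 \times 2$ principal minor and uses the symmetry $m_{vv'} = m_{v'v}$ of $M_G$.

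It remains to match this against the claimed closed form. The $(v,v')$ entry of ${\bf m}_G {\bf m}_G^T - M_G^{\circ 2}$ is $m_{vv} m_{v'v'} - m_{vv'}^2$, and conjugating by $\Delta_G^{-1}$ scales this entry by $f_v^{-1} f_{v'}^{-1}$; multiplying by the global factor $\prod_{w \in V} f_w$ then yields exactly $(m_{vv} m_{v'v'} - m_{vv'}^2) \prod_{w \neq v,v'} f_w$, reproducing the off-diagonal entries above. The diagonal is consistent as well: for $v = v'$ the bracketed matrix has entry $m_{vv}^2 - m_{vv}^2 = 0$, matching the vanishing diagonal of the Hessian. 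This establishes the displayed identity. For the final detection claim, observe that knowing the scalars $f_v, m_{vv}$ one can solve the off-diagonal relation for each $m_{vv'}^2$, so $p_{\mathcal{A}_G}$ together with these scalars determines $M_G^{\circ 2}$.

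The only delicate points are bookkeeping in nature: confirming the absence of a factor of two in the mixed partials (a feature of multi-affinity, in contrast to a pure $n_v^2$ term), and verifying the diagonal consistency, both of which fall out of the symmetry of $M_G$ and the rank-one structure of ${\bf m}_G {\bf m}_G^T$. I expect no genuine obstacle beyond these checks, since the heavy lifting—the explicit coefficient formula—is already supplied by Theorem~\ref{Tgeneral}(2).
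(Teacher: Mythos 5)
Your proof is correct and is essentially the argument the paper intends: the authors omit their proof precisely because it is ``a direct computation using Theorem~\ref{Tgeneral}(2),'' which is exactly what you carry out --- reading the coefficient of $n_v n_{v'}$ off the multi-affine expansion via the $2 \times 2$ principal minor $m_{vv} m_{v'v'} - m_{vv'}^2$, and matching it entrywise against $\prod_{w \in V} f_w \cdot \Delta_G^{-1} ({\bf m}_G {\bf m}_G^T - M_G^{\circ 2}) \Delta_G^{-1}$, with the vanishing diagonal accounted for by multi-affinity.

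One side remark in your write-up is false, though harmlessly so: the Hessian of a multi-affine polynomial is \emph{not} a constant matrix once $|V| \geq 3$ --- e.g.\ for $p = n_1 n_2 n_3$ one has $\partial_{n_1} \partial_{n_2} p = n_3$. What is true, and all you actually need, is that evaluating $\partial_{n_v} \partial_{n_{v'}} p_{\mathcal{A}_G}$ at $\bn = {\bf 0}_V$ annihilates the contribution of every monomial of degree at least three containing $n_v n_{v'}$, so that the $(v,v')$ entry equals the coefficient of $n_v n_{v'}$; the evaluation at the origin in the statement is therefore essential rather than ``harmless.'' With that one-line repair, your argument --- including the observation that no factor of $2$ arises in the mixed partial of distinct variables, and the final step solving for each $m_{vv'}^2$ from the known scalars $f_v, m_{vv}$ --- is complete and coincides with the paper's (omitted) proof.
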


For instance, if one works with the distance or adjacency matrix of $G$,
then all entries in $M_G$ are non-negative, and the entries $f_v, m_{vv}$
are also known (see above), so Proposition~\ref{Precover} recovers the
entire matrix $M_G$, and hence the graph $G$.

As the proof of Proposition~\ref{Precover} is based on a direct
computation using Theorem~\ref{Tgeneral}(2), and is similar to the
corresponding proof in our recent paper~\cite{CK-blowup}, we omit it
here. (Note however that the formula here is more general than its
counterpart in~\cite{CK-blowup}.)

\section{Results on real-stability}

We now move to results connecting the blowup-polynomial with spectral
graph theory and the geometry of (real) polynomials. The next result
provides a sufficient condition for the real-stability of
$p_{\mathcal{A}_G}$ in the above paradigm (and hence the real-rootedness
of its univariate specialization):

\begin{theorem}\label{Tstable}
Setting as in Theorem~\ref{Tgeneral}. Define $u_{\mathcal{A}_G}(x) :=
p_{\mathcal{A}_G}(x,x,\dots,x)$.
\begin{enumerate}
\item Suppose all scalars $f_v$ are (nonzero and) of the same sign. Then
$p_{\mathcal{A}_G}(\cdot)$ is real-stable. (This means that if all
arguments $\bn$ lie in the open upper half-plane $\Im(z) > 0$, then
$p_{\mathcal{A}_G}(\bn) \neq 0$.)
In particular, $u_{\mathcal{A}_G}$ is real-rooted.

\item Suppose in fact that all scalars $f_v$ are equal, say to $\lambda
\neq 0$. Then $x \in \R$ is a root of $u_{\mathcal{A}_G}$ if and only if
$x \neq 0$ and $-\lambda/x$ is an eigenvalue of $M_G$.
\end{enumerate}
\end{theorem}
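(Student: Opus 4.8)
The plan is to establish the two parts in reverse order, since part (2) is a direct computation that also pins down the univariate object used in part (1). By Theorem~\ref{Tgeneral}(1), with $\Delta_{(x,\dots,x)} = x \Id_V$ one has $u_{\mathcal{A}_G}(x) = p_{\mathcal{A}_G}(x,\dots,x) = \det(\Delta_G + x M_G)$; when all $f_v = \lambda$ this becomes $\det(\lambda \Id_V + x M_G)$. Evaluating at $x = 0$ gives $\lambda^{|V|} \neq 0$, so no root occurs at the origin, and for $x \neq 0$ I would factor out $x^{|V|}$ to write $u_{\mathcal{A}_G}(x) = x^{|V|} \det\bigl(M_G + (\lambda/x) \Id_V\bigr)$, which vanishes precisely when $-\lambda/x$ is an eigenvalue of $M_G$. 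This yields part (2) at once.

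For part (1), Theorem~\ref{Tgeneral}(2) already tells me $p_{\mathcal{A}_G}$ has real coefficients, so real-stability reduces to showing that $p_{\mathcal{A}_G}(\bn) = \det(\Delta_G + \Delta_\bn M_G)$ is nonzero whenever $\Im(n_v) > 0$ for every $v \in V$. Replacing $\mathcal{A}_G$ by $-\mathcal{A}_G$ merely multiplies the determinant by $(-1)^{|V|}$, so I may assume all $f_v > 0$. Since each $n_v$ lies in the open upper half-plane it is nonzero, hence $\Delta_\bn$ is invertible and I can factor $\Delta_G + \Delta_\bn M_G = \Delta_\bn (D + M_G)$ with $D := \Delta_\bn^{-1} \Delta_G = \diag(f_v / n_v)_{v \in V}$. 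A one-line computation gives $\Im(f_v/n_v) = -f_v \Im(n_v)/|n_v|^2 < 0$, so every diagonal entry of $D$ sits in the open lower half-plane.

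The crux is then to show $\det(D + M_G) \neq 0$ for such $D$. Suppose not; then $(D + M_G) x = 0$ for some nonzero $x \in \mathbb{C}^V$, and I would evaluate the Hermitian form $x^*(D + M_G)x = 0$. Since $M_G$ is real symmetric, $x^* M_G x$ is real, so taking imaginary parts leaves $\sum_{v \in V} \Im(f_v/n_v)\, |x_v|^2 = 0$; but every summand is $\leq 0$ and at least one $|x_v|^2 > 0$, a contradiction. Hence $p_{\mathcal{A}_G}$ is nonvanishing on the product of upper half-planes and is real-stable. The real-rootedness of $u_{\mathcal{A}_G}$ then follows by restricting to the diagonal $n_v = x$, which preserves real-stability and produces a univariate real-stable --- hence real-rooted --- polynomial, nonzero because $u_{\mathcal{A}_G}(0) = \prod_{v \in V} f_v \neq 0$.

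I expect the genuine obstacle to lie in the real-stability of part (1): the naive hope of invoking a determinantal real-stability criterion for positive-semidefinite linear combinations does not apply directly, because the summand $\Delta_\bn M_G$ is not symmetric. The resolution is the factoring step, which trades this non-symmetric term for a pure diagonal shift $D$ whose entries all lie in one half-plane, after which the elementary Hermitian-form argument closes the gap. Everything else is bookkeeping layered on top of Theorem~\ref{Tgeneral}.
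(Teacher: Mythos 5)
Your proof is correct, and part~(2) is verbatim the paper's argument: specialize Theorem~\ref{Tgeneral}(1) to get $u_{\mathcal{A}_G}(x) = \det(\lambda \Id_V + x M_G)$, note $u_{\mathcal{A}_G}(0) = \lambda^{|V|} \neq 0$, and factor out $x^{|V|}$. For part~(1), however, you take a genuinely different route. The paper also factors out $\det \Delta_\bn$, but then rewrites the remaining determinant as $\det\bigl(-\varepsilon M_G + \sum_{v} (-n_v^{-1})(\varepsilon f_v) E_{vv}\bigr)$ and invokes two external ingredients: the Borcea--Br\"and\'en determinantal criterion \cite[Proposition 2.4]{BB1} (real-stability of $\det(B + \sum_j z_j A_j)$ for $B$ real symmetric and $A_j$ positive semidefinite, applied since each $\varepsilon f_v E_{vv}$ is PSD), followed by the fact that stability is preserved under the inversion $n_w \mapsto -n_w^{-1}$ applied one variable at a time. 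You bypass both: after the same factorization $\Delta_G + \Delta_\bn M_G = \Delta_\bn (D + M_G)$ with $D = \diag(f_v/n_v)$, whose diagonal entries lie strictly in the open lower half-plane, you rule out a kernel vector directly via the Hermitian form $x^*(D + M_G)x$, whose imaginary part is a strictly negative sum --- essentially the standard proof of the Borcea--Br\"and\'en criterion specialized to diagonal coefficient matrices. Your argument is thus self-contained and elementary, at the cost of redoing in miniature what the citation packages; the paper's version is shorter modulo references and displays the structural mechanism (PSD coefficients plus M\"obius-closure of stability) that generalizes beyond this situation. One small quibble: your closing remark that the determinantal criterion ``does not apply directly'' because $\Delta_\bn M_G$ is not symmetric slightly misdescribes the obstruction --- the paper does apply that criterion, after the substitution $z_v = -n_v^{-1}$ renders the coefficient matrices PSD --- so the asymmetry is circumvented rather than fatal. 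Also, your WLOG reduction to $f_v > 0$ (negating both $\Delta_G$ and $M_G$, which only rescales the polynomial by $(-1)^{|V|}$) is sound but dispensable: if all $f_v < 0$ the entries of $D$ lie in the upper half-plane and the same contradiction obtains.
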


We remark here that these hypotheses are indeed satisfied when one
studies distance matrices of graphs \cite{CK-blowup}, or the adjacency or
Seidel matrices as above.

\begin{proof}\hfill
\begin{enumerate}
\item Suppose $\varepsilon \in \{ \pm 1 \}$ is the sign of every $f_v$,
so that the diagonal matrix $\varepsilon \Delta_G$ is positive definite.
We compute, allowing for the $n_v$ to now be \textit{complex} variables:
\[
p_{\mathcal{A}_G}(\bn) = \det(\Delta_\bn) (-\varepsilon)^{|V|} \det(
-\varepsilon M_G - \Delta_\bn^{-1} (\varepsilon \Delta_G)).
\]
Let $E_{vv}$ denote the elementary $V \times V$ matrix, with $1$ in the
$(v,v)$ entry and all other entries zero. Then,
\begin{equation}\label{Etemp}
p_{\mathcal{A}_G}(\bn) = \det (\Delta_\bn) (-\varepsilon)^{|V|} \det
\left( - \varepsilon M_G + \sum_{v \in V} (-n_v^{-1}) (\varepsilon f_v)
E_{vv} \right).
\end{equation}

Ignoring the scalar $(-\varepsilon)^{|V|}$, it suffices to show that the
determinant-expression times $\det (\Delta_\bn)$ is real-stable in the
$n_v$. Here, we recall a result of Borcea and Br\"and\'en
\cite[Proposition 2.4]{BB1}, which says that the determinantal polynomial
$\displaystyle \det (B + \sum_{j=1}^m z_j A_j)$
is real-stable in the $z_j$ if all $A_j$ are positive semidefinite and
$B$ is real symmetric. 
As an application, since the matrices $\varepsilon f_v E_{vv}$ are
positive semidefinite for all $v \in V$, the determinant in~\eqref{Etemp}
(without the extra factor of $\prod_v n_v$) is real-stable, provided that
one replaces each $(-n_v^{-1})$ by $n_v$. Now use that stability is
preserved under `inversion': if a polynomial $p(\{ n_v \})$ with
$n_w$-degree $d$ is stable (for some fixed $w \in V$), then so  is $n_w^d
p( \{ n_v : v \neq w \}, - n_w^{-1})$.
Applying this for each variable $n_v$ in turn, the first assertion
follows.

\item By Theorem~\ref{Tgeneral}(1),
$u_{\mathcal{A}_G}(x) = \det (\lambda \Id_V + x M_G)$,
and this does not vanish if $x=0$, since $\lambda \neq 0$. Thus, $x$
is a root here, if and only if $x \neq 0$ and
\[
0 = x^{-|V|} u_{\mathcal{A}_G}(x) = \det (\lambda x^{-1} \Id_V + M_G),
\]
and the result is immediate from here.\qedhere
\end{enumerate}
\end{proof}

The next result provides necessary conditions and sufficient conditions
for when a graph is a blowup, in terms of the matrix $\mathcal{A}_G$:

\begin{proposition}\label{Pblowup}
Notation as above; also suppose that all scalars $f_v$ are equal, say to
$\lambda \in \mathbb{R}$. Then each of the following statements implies
the next:
\begin{enumerate}
\item $G$ is a nontrivial blowup. In other words, $G$ is a blowup of a
graph $H$ with $|V(H)| < |V(G)|$.

\item There exist two vertices $v \neq w$ in $G$ which share the same set
of neighbors. (Thus, $v,w$ are not adjacent.)

\item $\lambda$ is an eigenvalue of the matrix $\mathcal{A}_G$.

\item The blowup-polynomial $p_{\mathcal{A}_G}$ has total degree at most
$|V|-1$.
\end{enumerate}
In fact the first two assertions are equivalent (and do not depend on
$\mathcal{A}_G$), and so are the last two.
\end{proposition}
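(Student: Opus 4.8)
The plan is to establish the chain in two self-contained pieces joined by a single bridge: the purely combinatorial equivalence $(1)\Leftrightarrow(2)$, the purely algebraic equivalence $(3)\Leftrightarrow(4)$, and the bridge $(2)\Rightarrow(3)$ connecting them. I would open by recording the one reformulation that drives everything: since all $f_v$ equal $\lambda$, we have $\mathcal{A}_G = \lambda \Id_V + M_G$, so that ``$\lambda$ is an eigenvalue of $\mathcal{A}_G$'' is equivalent to ``$M_G$ is singular'', i.e.\ $\det M_G = 0$. This reformulation of (3) is the common currency for both the bridge and the final equivalence.

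For $(1)\Leftrightarrow(2)$, which does not involve $\mathcal{A}_G$ at all, I would work with the relation $v \approx w \iff N(v) = N(w)$ on $V$, where $N(\cdot)$ denotes the open neighborhood. This is trivially an equivalence relation, and I would note the useful fact that $N(v) = N(w)$ with $v \neq w$ forces $v,w$ to be non-adjacent (else $w \in N(v) = N(w)$ produces a self-loop), so (2) is exactly the statement that some $\approx$-class has size $\geq 2$. For $(1)\Rightarrow(2)$, a nontrivial blowup $G = H[\bn]$ has some $n_u \geq 2$, and two copies of $u$ lie in one $\approx$-class. For $(2)\Rightarrow(1)$, I would form the quotient $H := G/\!\approx$ with $[v]$ adjacent to $[w]$ iff $v \sim_G w$; here I must check this adjacency is well-defined on classes and loop-free (both immediate from $N(v)=N(w)$), and then verify $G \cong H[\bn]$ with $n_{[v]} := |[v]|$, where $|V(H)| < |V(G)|$ precisely because (2) supplies a class of size $\geq 2$.

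The bridge $(2)\Rightarrow(3)$ is where the ``well-behaved under blowups'' hypothesis on $M_G$ enters. Having $(2)\Leftrightarrow(1)$ in hand, I would write $G = H[\bn]$ and invoke the block structure $M_{H[\bn]} = (m^H_{[v][w]} {\bf 1}_{n_{[v]} \times n_{[w]}})$: any two copies $v,w$ of the same vertex of $H$ then index identical columns of $M_G$, so $M_G (e_v - e_w) = 0$. Hence $\mathcal{A}_G(e_v - e_w) = \lambda(e_v - e_w)$, exhibiting $\lambda$ as an eigenvalue of $\mathcal{A}_G$ and giving (3); equivalently, two equal columns force $\det M_G = 0$.

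Finally, for $(3)\Leftrightarrow(4)$ I would read off the top-degree behaviour of $p_{\mathcal{A}_G}$ from Theorem~\ref{Tgeneral}(2). Since $p_{\mathcal{A}_G}$ is multi-affine, its total degree is at most $|V|$, and the unique degree-$|V|$ monomial $\prod_{v \in V} n_v$ has coefficient $\det (M_G)_{V \times V} = \det M_G$ (the empty product $\prod_{v \notin V} f_v$ being $1$). Thus the total degree drops to $\leq |V|-1$ exactly when $\det M_G = 0$, which by the opening reformulation is exactly (3). I expect the main obstacle to be the bridge $(2)\Rightarrow(3)$: the delicate point is that ``same neighborhood'' is a combinatorial condition on $G$, whereas (3) is a spectral condition on the possibly abstract matrix $M_G$, and translating between them is not automatic for a general graph-matrix -- it is precisely the blowup block-structure hypothesis (accessed via $(2)\Leftrightarrow(1)$) that produces the required kernel vector $e_v - e_w$. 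The remaining steps are then routine, with $(3)\Leftrightarrow(4)$ following directly from the coefficient formula.
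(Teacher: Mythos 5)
Your proof is correct and follows essentially the same route as the paper: the combinatorial equivalence $(1)\Leftrightarrow(2)$, the bridge $(2)\Rightarrow(3)$ via singularity of $M_G$ (equal rows/columns give a kernel vector, so $\mathcal{A}_G = \lambda \Id_V + M_G$ has eigenvalue $\lambda$), and $(3)\Leftrightarrow(4)$ by reading off the coefficient $\det M_G$ of the unique top-degree monomial from Theorem~\ref{Tgeneral}(2). The only differences are cosmetic and, if anything, more careful: where the paper simply asserts that (2) forces two identical rows of $M_G$, you justify this in the abstract setting by routing through $(1)$ and the blowup block-structure hypothesis on $M_{H[\bn]}$; and you realize $(2)\Rightarrow(1)$ via the full neighborhood-equivalence quotient rather than deleting one of the two twin vertices.
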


\begin{proof}
The first two assertions are taken from~\cite{CK-blowup}; we reproduce
that proof here. If $(1)$ holds then there are two distinct vertices
which are copies of one another, proving $(2)$. Conversely, if $(2)$
holds then $G$ is a blowup of the smaller graph where one of these two
vertices is removed, proving $(1)$.

We next show $(2) \implies (3)$: if $(2)$ holds, then $M_G$ contains two
identical rows, hence is singular. But then $\mathcal{A}_G = M_G +
\lambda \Id_V$ has $\lambda$ as an eigenvalue, proving $(3)$. Finally,
$(3)$ holds if and only if $M_G$ is singular, if and only if (by
Theorem~\ref{Tgeneral}(2)) the unique monomial in
$p_{\mathcal{A}_G}(\bn)$ of top degree is zero, proving that $(3)
\Longleftrightarrow (4)$.
\end{proof}

Our final result in this section takes an in-depth look at the
real-stability of the polynomial $p_{\mathcal{A}_G}(\bn)$. Notice by
Theorem~\ref{Tgeneral}(1) that these polynomials are not homogeneous; nor
are their coefficients \textit{a priori} all of the same sign.
Real-stable polynomials with either of these two properties are the
subjects of active research: the former fall in the broader family of
\textit{Lorentzian polynomials}~\cite{BH}, as well as
\textit{strongly/completely log-concave polynomials
\cite{AGV,Gurvits}}; and the latter are termed \textit{strongly Rayleigh
polynomials}, and are important in probability and the theory of negative
dependence (see~\cite{BBL} and the references therein). We first provide
the necessary definitions.

\begin{definition}
Fix a real polynomial of complex variables $p(z_1, \dots, z_k) \in
\R[z_1, \dots, z_k]$.
\begin{enumerate}
\item A real symmetric matrix is \emph{Lorentzian} if it is nonsingular
with only one positive eigenvalue.

\item Following~\cite{BH}, we say $p$ is \emph{Lorentzian} if $p$ is
homogeneous of some degree $d \geq 2$, has non-negative coefficients, and
given indices $1 \leq j_1, \dots, j_{d-2} \leq k$, if we define
\[
g(z_1, \dots, z_k) := \left( \partial_{z_{j_1}} \cdots
\partial_{z_{j_{d-2}} } p \right)(z_1, \dots, z_k),
\]
then its Hessian matrix $\mathcal{H}_g := (\partial_{z_i} \partial_{z_j}
g)_{i,j=1}^k \in \R^{k \times k}$ is Lorentzian.

\item Following~\cite{Gurvits}, we say that $p$ is \emph{strongly
log-concave} if $p$ has all coefficients non-negative, and for all tuples
$\alpha \in \mathbb{Z}_{\geq 0}^k$, either the derivative $\displaystyle
\partial^\alpha (p) := \prod_{i=1}^k \partial_{z_i}^{\alpha_i} \cdot p$
is identically zero, or $\log(\partial^\alpha (p))$ is defined and
concave on $(0,\infty)^k$.

\item Following~\cite{AGV}, we say that $p$ is \emph{completely
log-concave} if $p$ has all coefficients non-negative, and for all
integers $m \geq 0$ and matrices $A = (a_{ij}) \in [0,\infty)^{m \times
k}$, either the derivative $\displaystyle \partial_A (p) := \prod_{i=1}^m
\left( \sum_{j=1}^k a_{ij} \partial_{z_j} \right) \cdot p$ is identically
zero, or $\log(\partial_A (p))$ is defined and concave on $(0,\infty)^k$.

\item We say $p$ is \emph{strongly Rayleigh} if $p$ is multi-affine and
real-stable in the $z_j$, and has all coefficients non-negative and
summing to $1$.
\end{enumerate}
\end{definition}

We now explore when the homogenized blowup-polynomial of
$p_{\mathcal{A}_G}$ is Lorentzian -- or its normalization is strongly
Rayleigh -- in the spirit of a result proved in~\cite{CK-blowup} for
distance matrices. The following adapts that result to the current
setting.

\begin{theorem}\label{Tlorentz}
Setting as in Theorem~\ref{Tgeneral}, with $k := |V| \geq 2$. Also
suppose that all scalars $f_v \in \R$ are nonzero and have the same sign
$\varepsilon \in \{ \pm 1 \}$. Define the homogenized polynomial
\[
\widetilde{p}_{\mathcal{A}_G}(z_0, z_1, \dots, z_k) := (\varepsilon
z_0)^k p_{\mathcal{A}_G}(\varepsilon z_1 / z_0, \dots, \varepsilon z_k /
z_0) \in \R[z_0, z_1, \dots, z_k],
\]
with possibly complex arguments.
Then the following statements are equivalent.
\begin{enumerate}
\item $\widetilde{p}_{\mathcal{A}_G}(z_0, z_1, \dots, z_k)$ is
real-stable.

\item $\widetilde{p}_{\mathcal{A}_G}(z_0, z_1, \dots, z_k)$ is
Lorentzian (equivalently, strongly / completely log-concave).

\item All coefficients of $\widetilde{p}_{\mathcal{A}_G}(z_0, z_1, \dots,
z_k)$ are non-negative.

\item We have $\varepsilon^k p_{\mathcal{A}_G}(\varepsilon, \dots,
\varepsilon) > 0$, and the following polynomial is strongly Rayleigh:
\[
(z_1, \dots, z_k) \quad \mapsto \quad \frac{p_{\mathcal{A}_G}(\varepsilon
z_1, \dots, \varepsilon z_k)}{p_{\mathcal{A}_G}(\varepsilon, \dots,
\varepsilon)}.
\]

\item The matrix $M_G$ is positive semidefinite.
\end{enumerate}
\end{theorem}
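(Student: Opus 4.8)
The plan is to route all five conditions through the single statement that every principal minor of $M_G$ is non-negative, i.e.\ that $M_G \succeq 0$. First I would record a clean closed form for the homogenized polynomial. Writing $D := \varepsilon \Delta_G = \diag(|f_v|)_{v \in V}$, which is positive definite by hypothesis, and $\Delta_{\mathbf z} := \diag(z_v)_{v \in V}$, a direct substitution into Theorem~\ref{Tgeneral}(1) gives
\[
\widetilde{p}_{\mathcal A_G}(z_0, z_1, \dots, z_k) = \det(z_0 D + \Delta_{\mathbf z} M_G),
\]
a polynomial homogeneous of degree $k$. Expanding via Theorem~\ref{Tgeneral}(2), the coefficient of $z_0^{k-|I|} \prod_{v \in I} z_v$ equals $c_I := \det(M_G)_{I \times I} \prod_{v \notin I} |f_v|$. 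Since $\prod_{v \notin I} |f_v| > 0$, the sign of each coefficient is exactly the sign of the principal minor $\det(M_G)_{I \times I}$, so (3)$\Leftrightarrow$(5) is immediate: all coefficients are non-negative iff all principal minors of $M_G$ are non-negative, which for a real symmetric matrix is precisely $M_G \succeq 0$.

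Next I would close the loop (1)$\Rightarrow$(3)$\Rightarrow$(5)$\Rightarrow$(1). For (1)$\Rightarrow$(3), fix $I \subseteq V$, specialize $z_w \mapsto 0$ for $w \notin I$ (a real substitution, hence stability-preserving), and then identify all remaining $z_v$ ($v \in I$) with a single variable $t$ while renaming $z_0$ as $s$ (restriction to a diagonal also preserves stability). The outcome is a homogeneous bivariate real-stable polynomial whose $s^k$-coefficient $\prod_v |f_v|$ is positive; such a polynomial factors into real linear forms, each necessarily with coefficients of one common sign, so all of its coefficients share the (positive) sign of the $s^k$-coefficient. In particular its top $t$-coefficient $c_I$ is $\ge 0$, giving (3). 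The step (3)$\Rightarrow$(5) is the classical characterization of positive semidefiniteness already noted.

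The crucial implication is (5)$\Rightarrow$(1). Assuming $M_G \succeq 0$, the positive congruence by $D^{-1/2}$ (and the fact that diagonal matrices commute) gives $\widetilde{p}_{\mathcal A_G} = \det(D)\, \det(z_0 \Id + \Delta_{\mathbf z} \widetilde M)$ with $\widetilde M := D^{-1/2} M_G D^{-1/2} \succeq 0$. I would then realize $\widetilde M$ as a Gram matrix, factoring $\widetilde M = \widetilde A^T \widetilde A$ with columns $\tilde a_v$, so that $\Delta_{\mathbf z} \widetilde M$ is replaced inside the determinant by the symmetric rank-one pencil $\widetilde A \Delta_{\mathbf z} \widetilde A^T = \sum_v z_v\, \tilde a_v \tilde a_v^T$. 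The point is that the rank-one expansion
\[
\det\bigl(z_0 \Id + \textstyle\sum_v z_v\, \tilde a_v \tilde a_v^T\bigr) = \sum_{I \subseteq V} z_0^{k-|I|} \prod_{v \in I} z_v \cdot \det\bigl((\tilde a_v^T \tilde a_w)_{v,w \in I}\bigr)
\]
has coefficients $\det(\widetilde M)_{I \times I}$, which match those of $\det(z_0 \Id + \Delta_{\mathbf z} \widetilde M)$ by Theorem~\ref{Tgeneral}(2); hence the two determinants coincide. The pencil $z_0 \Id + \sum_v z_v\, \tilde a_v \tilde a_v^T$ now has positive-semidefinite coefficient matrices ($\Id$ and the rank-one $\tilde a_v \tilde a_v^T$), so $\widetilde{p}_{\mathcal A_G}$ is real-stable by the Borcea--Br\"and\'en determinantal criterion \cite[Proposition 2.4]{BB1}. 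Finally I would absorb (2) and (4): having (1) and (3), $\widetilde{p}_{\mathcal A_G}$ is homogeneous, real-stable, and non-negatively supported, hence Lorentzian --- equivalently strongly/completely log-concave by the coincidence of these notions \cite{BH,AGV,Gurvits} --- while conversely Lorentzian forces non-negative coefficients, giving (2)$\Leftrightarrow$(3); and since $q(\mathbf z) := p_{\mathcal A_G}(\varepsilon z_1, \dots, \varepsilon z_k)/p_{\mathcal A_G}(\varepsilon, \dots, \varepsilon)$ is multi-affine and (being a $\pm$-scaling of $p_{\mathcal A_G}$) always real-stable by Theorem~\ref{Tstable}(1), with coefficients $c_I / \sum_J c_J$ summing to $1$, it is strongly Rayleigh together with $\varepsilon^k p_{\mathcal A_G}(\varepsilon, \dots, \varepsilon) = \sum_J c_J > 0$ exactly when all $c_I \ge 0$, i.e.\ under (3), establishing (4)$\Leftrightarrow$(3).

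I expect the main obstacle to be precisely (5)$\Rightarrow$(1): the naive pencil $z_0 D + \Delta_{\mathbf z} M_G$ is \emph{not} symmetric, because $\Delta_{\mathbf z}$ multiplies $M_G$ on only one side, so the Borcea--Br\"and\'en criterion cannot be invoked directly; and homogenization does not preserve real-stability in general. The resolution --- realizing $M_G$ as a Gram matrix and recognizing, via the principal-minor expansion above, that the resulting \emph{symmetric} rank-one pencil has the same determinant --- is the technical heart of the argument, and is exactly the place where positive semidefiniteness of $M_G$ is used.
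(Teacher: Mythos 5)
Your proof is correct, and its overall architecture coincides with the paper's: the same closed form $\widetilde{p}_{\mathcal{A}_G} = \det(z_0\,\varepsilon\Delta_G + \Delta_{\mathbf z} M_G)$, the identification of coefficients with principal minors for $(3)\Leftrightarrow(5)$, and --- at the technical heart --- the congruence by $(\varepsilon\Delta_G)^{-1/2}$ followed by symmetrizing the pencil to $\sum_v z_v\, \tilde a_v \tilde a_v^T$ so that Borcea--Br\"and\'en \cite[Proposition 2.4]{BB1} applies; with $\widetilde{A}$ taken to be the square PSD square root (which you should say explicitly, since for a rectangular Gram factor the identity as you wrote it picks up a harmless extra factor $z_0^{k-\mathrm{rank}}$), your pencil is literally the paper's $\sqrt{\mathcal{C}_G}\,\Delta_{\mathbf z}\sqrt{\mathcal{C}_G}$. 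You deviate in three local sub-proofs, each legitimately and in two cases arguably more self-contained. First, for $(1)\Rightarrow(3)$ the paper goes through $(1)\Rightarrow(2)\Rightarrow(3)$ by citing Br\"and\'en--Huh, whereas you give a direct elementary argument by restricting to two variables and factoring the homogeneous bivariate stable polynomial into real linear forms of common sign --- in effect reproving the ``same-sign'' property of homogeneous real-stable polynomials without Lorentzian machinery (you still need \cite{BH} for $(3)\Rightarrow(2)$, as you note). Second, you justify the key determinant identity $\det(z_0 \Id + \Delta_{\mathbf z}\widetilde{M}) = \det(z_0\Id + \sum_v z_v \tilde a_v \tilde a_v^T)$ by matching principal-minor coefficients (Cauchy--Binet style), where the paper expands a $2\times 2$ block determinant in two ways via Schur complements; both are one-line verifications. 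Third, for $(4)$ you observe that the normalized polynomial is \emph{unconditionally} real-stable by Theorem~\ref{Tstable}(1) (real-stability of a real polynomial is preserved under $z \mapsto \varepsilon z$ even for $\varepsilon = -1$, by conjugation symmetry of real zero sets), so that $(3)\Leftrightarrow(4)$ reduces purely to the sign and normalization of the coefficients; the paper instead derives stability of this polynomial from hypothesis $(1)$ by specializing $z_0 \mapsto 1$, proving $(1)\Rightarrow(4)\Rightarrow(3)$. Your variant gives the slightly stronger standalone equivalence $(3)\Leftrightarrow(4)$ and shortens the logical cycle, at no extra cost.
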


Before proving Theorem~\ref{Tlorentz}, we note that it is a `negative'
result for the graph-properties discussed in this paper. For example, for
the distance matrix the only graphs for which $M_G = D_G + 2 \Id_V$ is
positive semidefinite, are complete (multipartite) graphs,
by~\cite{LHWS}. For the adjacency matrix, the only graphs for which $M_G$
is positive semidefinite are the graphs with no edges. Nevertheless, the
family of matrices $\mathcal{A}_G$ as in~\eqref{Estart} can contain other
examples for which the matrix $M_G$ is positive semidefinite.

\begin{proof}[Proof of Theorem~\ref{Tlorentz}]
That $(1) \implies (2)$ was shown in~\cite{BH}, and that a Lorentzian
polynomial satisfies~(3) follows from the definitions. The equivalences
inside assertion~(2) were shown in \cite[Theorem 2.30]{BH}.
We next show that $(1) \implies (4) \implies (3)$. Given $(1)$ (and hence
$(3)$), we see that the sum of all coefficients in
$\widetilde{p}_{\mathcal{A}_G}$ equals its value at $(1,1,\dots,1)$, and
so using $(3)$:
\[
\varepsilon^k p_{\mathcal{A}_G}(\varepsilon,\dots,\varepsilon) =
\widetilde{p}_{\mathcal{A}_G}(1,1,\dots,1) \geq
\widetilde{p}_{\mathcal{A}_G}(1,0,\dots,0) = \prod_{v \in V} (\varepsilon
f_v) > 0.
\]
Moreover, the coefficients of the normalized polynomial
\begin{equation}\label{Erayleigh}
\frac{p_{\mathcal{A}_G}(\varepsilon z_1, \dots, \varepsilon
z_k)}{p_{\mathcal{A}_G}(\varepsilon, \dots, \varepsilon)} =
\frac{\widetilde{p}_{\mathcal{A}_G}(1, z_1, \dots,
z_k)}{\widetilde{p}_{\mathcal{A}_G}(1, 1, \dots, 1)}
\end{equation}
are non-negative and add up to one. Finally, the left-hand side is
real-stable because the right-hand side is, by~$(1)$ and by specializing
at $z_0 \mapsto 1$ (which preserves real-stability). This shows~$(4)$.
Conversely, if~$(4)$ holds, then Equation~\eqref{Erayleigh}
implies~$(3)$, as desired.

Finally, if~$(3)$ holds, then Theorem~\ref{Tgeneral}(2) (and the
hypotheses that $\varepsilon f_v > 0\ \forall v \in V$) implies that
every principal minor of $M_G$ is non-negative. This is because the
coefficient of $z_0^{k - |J|} \prod_{v \in J} z_v$ equals $\prod_{v
\in V \setminus J} (\varepsilon f_v) \cdot \det (M_G)_{J \times J}$, and
this is to be non-negative for every subset $J$. This shows~$(5)$.
Conversely, if~$(5)$ holds, then we use the positive semidefinite matrix
\[
\mathcal{C}_G := (\varepsilon \Delta_G)^{-1/2} M_G (\varepsilon
\Delta_G)^{-1/2}
\]
in the following computation:
\begin{align*}
\widetilde{p}_{\mathcal{A}_G}(z_0, z_1, \dots, z_k) = &\ \det
(\varepsilon \Delta_G)^{1/2} \det (z_0 \Id_k + \Delta_{\bf z}
\mathcal{C}_G) \det (\varepsilon \Delta_G)^{1/2}\\
= &\ \det (\varepsilon \Delta_G) \det( z_0 \Id_k + \sqrt{\mathcal{C}_G}
\Delta_{\bf z} \sqrt{\mathcal{C}_G})
= \det ( z_0 \Id_k + \sum_{v \in V} z_v \sqrt{\mathcal{C}_G} E_{vv}
\sqrt{\mathcal{C}_G} ),
\end{align*}
where the second equality comes from expanding
$\det \begin{pmatrix} z_0 \Id_k & -\sqrt{\mathcal{C}_G} \\ \Delta_{\bf z}
\sqrt{\mathcal{C}_G} & \Id_k \end{pmatrix}$
in two ways, both via Schur complements; and where the matrix $E_{vv}$
was defined prior to~\eqref{Etemp}. Now the final expression is indeed
real-stable by \cite[Proposition 2.4]{BB1} (see the discussion
following~\eqref{Etemp}), so~$(1)$ holds.
\end{proof}

\section{The adjacency blowup-polynomial}

We conclude this note by illustrating several of the above results in the
general case, by specializing them to the adjacency matrix of a graph and
its blowup-polynomial, as a particular example.

Suppose $M_G = A_G$ is the adjacency matrix (so $m_{vv} = 0\ \forall v
\in V$), and $\lambda \in \R$ is any fixed nonzero scalar. Defining
$\mathcal{A}_{G,\lambda} = \lambda \Id_V + A_G$, we obtain a real-stable
polynomial $p_{\mathcal{A}_{G,\lambda}}(\bn)$. Now the coefficient in
$p_{\mathcal{A}_{G,\lambda}}(\bn)$ of any monomial $\bn^I$ (where $I
\subset V$) is a scalar times the determinant of $(A_G)_{I \times I}$;
note this principal submatrix is itself block diagonal, with components
corresponding to the connected components of the induced subgraph on $I$
(this makes $\det (A_G)_{I \times I}$ `easier' to compute). This
observation and others lead to the following result.

\begin{proposition}
Suppose $M_G = A_G$, the adjacency matrix of a graph $G$; and $\Delta_G =
\lambda \Id_V$ for a fixed nonzero scalar $\lambda \in \R$.
\begin{enumerate}
\item Suppose $H \subset G$ is an induced subgraph. Then
\[
p_{\mathcal{A}_{H,\lambda}}(\{ n_v : v \in V(H) \}) =
p_{\mathcal{A}_{G,\lambda}}(\bn) |_{n_v = 0\ \forall v \in V(G) \setminus
V(H)} \cdot \lambda^{|V(H)| - |V(G)|}.
\]

\item If $H,H'$ are isomorphic subgraphs inside $G$, then the
coefficients in $p_{\mathcal{A}_{G,\lambda}}$ of the monomials
corresponding to $V(H)$ and $V(H')$ are equal.

\item Suppose for some non-empty subset $J \subset V(G)$ that the induced
subgraph on $J$ contains a connected component which is a tree without a
perfect matching. Then the coefficient of $\bn^J$ in
$p_{\mathcal{A}_{G,\lambda}}(\bn)$ is zero.

\item If $\lambda = -1$, then up to a scalar, the univariate
specialization $u_{\mathcal{A}_{G,-1}}(x)$ is precisely the `inversion'
of the characteristic polynomial of $A_G$:
$u_{\mathcal{A}_{G,-1}}(z) = z^{|V|} \chi_{A_G}(z^{-1})$.
\end{enumerate}
\end{proposition}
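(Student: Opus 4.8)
The plan is to derive all four assertions from the coefficient formula of Theorem~\ref{Tgeneral}(2). In the present setting $M_G = A_G$ and $f_v = \lambda$ for every $v$, so that formula says the coefficient of the monomial $\bn^I := \prod_{v \in I} n_v$ in $p_{\mathcal{A}_{G,\lambda}}(\bn)$ equals $\lambda^{|V|-|I|} \det (A_G)_{I \times I}$ for each $I \subset V$. The single recurring observation is that $(A_G)_{I \times I}$ is precisely the adjacency matrix of the induced subgraph of $G$ on $I$, and in particular depends only on this induced subgraph.

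For part~(1) I would compare the two sides coefficient-by-coefficient. Setting $n_v = 0$ for $v \in V(G) \setminus V(H)$ annihilates every monomial $\bn^I$ with $I \not\subset V(H)$ and leaves those indexed by $I \subset V(H)$ unchanged. For such $I$ the right-hand coefficient is $\lambda^{|V(G)|-|I|} \det(A_G)_{I \times I}$, while on the left it is $\lambda^{|V(H)|-|I|} \det(A_H)_{I \times I}$. Since $H$ is an induced subgraph we have $(A_H)_{I \times I} = (A_G)_{I \times I}$, so the two agree after multiplying the right-hand side by $\lambda^{|V(H)|-|V(G)|}$, which is exactly the claimed identity. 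Part~(2) is the case $I = V(H)$ of the same formula: the coefficient of $\bn^{V(H)}$ equals $\lambda^{|V(G)|-|V(H)|}\det A_H$, where $A_H$ denotes the adjacency matrix of the induced subgraph on $V(H)$ and hence records only that induced subgraph. An isomorphism $H \cong H'$ is a vertex bijection conjugating $A_H$ to $A_{H'}$ by a permutation matrix, and thus preserves the determinant; since also $|V(H)| = |V(H')|$, the two coefficients coincide.

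For part~(3), block-diagonality of $(A_G)_{J \times J}$ across the connected components of the induced subgraph on $J$ factors the coefficient of $\bn^J$ as $\lambda^{|V|-|J|} \prod_C \det A_C$, the product running over those components $C$. It therefore suffices to show $\det A_T = 0$ for a tree $T$ with no perfect matching. I would expand $\det A_T = \sum_\sigma \operatorname{sgn}(\sigma) \prod_i (A_T)_{i,\sigma(i)}$ over permutations $\sigma$: a nonzero term forces $\{i,\sigma(i)\}$ to be an edge of $T$ for every $i$, and $\sigma(i)\neq i$ since $A_T$ has zero diagonal. Hence the cycle decomposition of $\sigma$ consists of $2$-cycles (each traversing a single edge) and cycles of length $\geq 3$ that trace genuine cycles of $T$. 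As a tree is acyclic, the latter cannot occur, so $\sigma$ must be a fixed-point-free involution, i.e.\ a perfect matching of $T$. In the absence of such a matching every term vanishes, giving $\det A_T = 0$ and hence a zero coefficient.

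Finally, for part~(4), Theorem~\ref{Tgeneral}(1) gives $u_{\mathcal{A}_{G,-1}}(x) = p_{\mathcal{A}_{G,-1}}(x,\dots,x) = \det(-\Id_V + x A_G)$. Pulling out $x^{|V|}$ rewrites this as $x^{|V|}\det(A_G - x^{-1}\Id_V)$, which is exactly $x^{|V|}\chi_{A_G}(x^{-1})$ under the convention $\chi_{A_G}(t)=\det(A_G - t\Id_V)$, and differs from it only by $(-1)^{|V|}$ under the opposite convention; this is the asserted ``inversion'' identity (``up to a scalar''). I expect the only genuine obstacle to be the determinant-vanishing step in part~(3), handled by the permutation expansion together with acyclicity of the tree; the remaining parts are bookkeeping with the coefficient formula of Theorem~\ref{Tgeneral}(2).
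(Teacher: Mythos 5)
Your proposal is correct and follows essentially the same route as the paper: both rest on the coefficient formula of Theorem~\ref{Tgeneral}(2) together with the key fact that $(A_G)_{J \times J}$ is the adjacency matrix of the induced subgraph on $J$ (with the specialization $n_v = 0$ for part~(1) and the block-diagonal factorization over components for part~(3)). The only difference is that where the paper cites the classical fact that a tree's adjacency matrix is singular iff it lacks a perfect matching and invokes Theorem~\ref{Tstable}(2) for part~(4), you supply the short permutation-expansion proof of the former (using acyclicity to rule out cycles of length $\geq 3$ in the support of $\sigma$) and the direct computation $u_{\mathcal{A}_{G,-1}}(x) = \det(-\Id_V + x A_G)$ for the latter -- both of which are exactly the arguments behind the paper's citations.
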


The final part allows one to interpret the eigenvalues of $A_G$ in terms
of the roots of $u_{\mathcal{A}_{G,-1}}$. (E.g.\ the second largest
eigenvalue is important for studying $d$-regular bipartite Ramanujan
graphs.)

\begin{proof}
The key fact used in these results is that the adjacency matrix of any
subgraph on $J \subset V(G)$ is the principal $J \times J$ submatrix of
$A_G$. This fact, combined with Theorem~\ref{Tgeneral}(2) and that
$\Delta_G$ is a scalar matrix, immediately yields the second part.
The `key fact' also yields the first part via setting all other variables
$n_v, v \not\in V(H)$ to zero, since this yields a matrix with only the
diagonal entry nonzero in each row not indexed by $V(H)$. The first part
follows easily from here.

The third part holds because of the observation immediately preceding
this proposition, combined with the classical fact (see e.g.~\cite{CDS})
that the adjacency matrix of a tree is nonsingular if and only if the
tree has a perfect matching. The final part follows immediately from
Theorem~\ref{Tstable}(2).
\end{proof}

The multivariate blowup-polynomial also has other attractive properties;
we mention one that is crucially used in studying bipartite graphs
(including in constructing bipartite Ramanujan expanders in~\cite{MSS1}
and its precursors). A folklore result is that $G$ is bipartite if and
only if its adjacency spectrum is symmetric (as a multiset) around the
origin. In fact, this extends to the zero-locus of the blowup-polynomial
-- now yielding an even polynomial since $u_{\mathcal{A}_{G,-1}}$ is the
`inversion' of the adjacency characteristic polynomial:

\begin{proposition}
Suppose $G$ is a graph, $\lambda \in \R$ is nonzero, and we set
$\mathcal{A}_{G,\lambda} = \lambda \Id_V + A_G$. The following are
equivalent:
\begin{enumerate}
\item $p_{\mathcal{A}_{G,\lambda}}$ is even in $\bn$.

\item $G$ is bipartite.
\end{enumerate}
\end{proposition}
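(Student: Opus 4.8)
The plan is to reduce the evenness of $p_{\mathcal{A}_{G,\lambda}}$ to a statement about principal minors of $A_G$, and then identify that statement with bipartiteness. Since $p_{\mathcal{A}_{G,\lambda}}$ is multi-affine by Theorem~\ref{Tgeneral}(1), each of its monomials has the form $\bn^I = \prod_{v \in I} n_v$ for some $I \subseteq V$, and under the substitution $\bn \mapsto -\bn$ this monomial is scaled by $(-1)^{|I|}$. Hence $p_{\mathcal{A}_{G,\lambda}}$ is even exactly when every monomial $\bn^I$ with $|I|$ odd has vanishing coefficient. By Theorem~\ref{Tgeneral}(2) that coefficient equals $\lambda^{|V|-|I|}\det(A_G)_{I\times I}$, and since $\lambda \neq 0$, evenness is equivalent to $\det(A_G)_{I\times I}=0$ for every odd-cardinality $I\subseteq V$. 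Because $(A_G)_{I\times I}$ is precisely the adjacency matrix of the induced subgraph of $G$ on $I$, the proposition reduces to the combinatorial equivalence: every odd-order induced subgraph of $G$ has singular adjacency matrix if and only if $G$ is bipartite.

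For the forward direction I would use that every induced subgraph of a bipartite graph is again bipartite. After ordering the vertices of such a subgraph by its two sides, of sizes $a$ and $b$, its adjacency matrix takes the block anti-diagonal form $\left(\begin{smallmatrix} 0 & B \\ B^T & 0 \end{smallmatrix}\right)$, whose rank equals $2\,\mathrm{rank}(B)$ and is therefore even. If the subgraph has an odd number $a+b$ of vertices, an even rank is forced to be strictly smaller than $a+b$, so the adjacency matrix is singular and the associated odd principal minor vanishes. Thus bipartiteness of $G$ yields evenness of $p_{\mathcal{A}_{G,\lambda}}$.

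For the converse I would argue the contrapositive: if $G$ is not bipartite, I produce an odd-cardinality $I$ with $\det(A_G)_{I\times I}\neq 0$. A non-bipartite graph contains an odd cycle; choosing one of shortest odd length, it must be chordless, since any chord would split it into two paths, one of even length, whose union with the chord is a strictly shorter odd cycle, contradicting minimality. Taking $I$ to be the vertex set of this induced cycle, $(A_G)_{I\times I}$ is exactly the adjacency matrix of an odd cycle $C_m$. Finally I would verify $\det A(C_m)\neq 0$ via its circulant spectrum $\{2\cos(2\pi j/m)\}_{0\le j<m}$: a vanishing eigenvalue would require $4j = m(2k+1)$, impossible for $m$ odd since the right side is odd while the left is divisible by $4$. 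Hence this odd principal minor is nonzero and $p_{\mathcal{A}_{G,\lambda}}$ is not even.

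The main obstacle, modest as it is, lies entirely in the converse, and consists of two points where care is needed: the graph-theoretic observation that the shortest odd cycle is induced (so that its vertex set really does give an honest principal submatrix of $A_G$, equal to $A(C_m)$), and the spectral fact that an odd cycle has nonsingular adjacency matrix. The parity of $m$ is genuinely essential here, since a longer odd cycle need not be chordless and even cycles $C_m$ with $m\equiv 0\pmod 4$ are in fact singular; so neither ingredient can be dispensed with.
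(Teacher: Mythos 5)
Your proof is correct, and its converse is essentially the paper's: both produce an induced odd cycle in a non-bipartite $G$, note that its adjacency matrix is a principal submatrix of $A_G$ with nonzero circulant determinant (eigenvalues $2\cos(2\pi j/m)$, none zero for $m$ odd), and conclude via the coefficient formula of Theorem~\ref{Tgeneral}(2) that an odd-degree monomial survives. You add two details the paper leaves implicit: the shortest odd cycle is chordless (your chord-splitting parity argument is sound), and the explicit check that $2\cos(2\pi j/m)=0$ would force $4j=m(2k+1)$, impossible for odd $m$. Where you genuinely diverge is the forward direction. The paper proves evenness as a single polynomial identity: conjugating inside the determinant by the signature matrix $\diag(-\Id,\Id)$ adapted to the bipartition gives $\det(\lambda \Id_V - \Delta_\bn A_G) = \det(\lambda \Id_V + \Delta_\bn A_G)$, i.e.\ $p_{\mathcal{A}_{G,\lambda}}(-\bn)=p_{\mathcal{A}_{G,\lambda}}(\bn)$ in one stroke. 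You instead work coefficient-by-coefficient: evenness of the multi-affine polynomial reduces to the vanishing of $\det(A_G)_{I\times I}$ for all odd $|I|$, which you establish from the fact that an induced subgraph of a bipartite graph is bipartite and that a matrix of the form $\left(\begin{smallmatrix} 0 & B \\ B^T & 0 \end{smallmatrix}\right)$ has even rank $2\,\mathrm{rank}(B)$, hence is singular in odd order. Both arguments are complete; the paper's conjugation is slicker and immediately exhibits the symmetry of the zero-locus, while yours isolates a sharper combinatorial fact (every odd-order induced subgraph of a bipartite graph has singular adjacency matrix) and makes the equivalence with evenness transparent through the single minor-vanishing criterion used in both directions.
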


In particular, the zeros of $p_{\mathcal{A}_{G,\lambda}}$ for any
bipartite graph $G$ are symmetric around the origin, and there are no
odd-degree monomials in $p_{\mathcal{A}_{G,\lambda}}$.

\begin{proof}
First suppose $G$ is bipartite. Write the adjacency matrix as $A_G =
\begin{pmatrix} 0 & B^T \\ B & 0 \end{pmatrix}$, and compute:
\begin{align*}
p_{\mathcal{A}_{G,\lambda}}(-\bn) = &\ \det \begin{pmatrix} - \Id & 0 \\
0 & \Id \end{pmatrix} \det (\lambda \Id_V - \Delta_\bn A_G) \det
\begin{pmatrix} - \Id & 0 \\ 0 & \Id \end{pmatrix}\\
= &\ \det \left( \lambda \Id_V - \Delta_\bn
\begin{pmatrix} - \Id & 0 \\ 0 & \Id \end{pmatrix} 
\begin{pmatrix} 0 & B^T \\ B & 0 \end{pmatrix}
\begin{pmatrix} - \Id & 0 \\ 0 & \Id \end{pmatrix} \right)\\
= &\ \det (\lambda \Id_V + \Delta_\bn A_G) =
p_{\mathcal{A}_{G,\lambda}}(\bn),
\end{align*}
as desired.

Conversely, suppose~(1)
$p_{\mathcal{A}_{G,\lambda}}(\bn) \equiv
p_{\mathcal{A}_{G,\lambda}}(-\bn)$.
Now it is well-known that the adjacency matrix $A_{C_m}$ for any cycle
graph $C_m$ on $m$ vertices, is circulant, and has eigenvalues $2 \cos(2
\pi j / m)$ for $0 \leq j < m$. If $m$ is odd then no eigenvalue is zero
and $A_{C_m}$ is non-singular.
Returning to the proof of the converse, if $G$ is not bipartite then it
has an induced odd cycle, say with vertices $v_1, \dots, v_m$ for $m$
odd. In particular, the coefficient of $n_{v_1} \cdots n_{v_m}$ in
$p_{\mathcal{A}_{G,\lambda}}$ is nonzero, by Theorem~\ref{Tgeneral}(2).
But this contradicts~(1). The converse follows.
\end{proof}

We conclude with two directions of future investigation. In one
direction, as we just saw, for particular classes of graphs (e.g.,
bipartite graphs), one can obtain additional information that refines and
enriches the univariate picture. Another direction involves the adjacency
blowup-polynomial and blowup delta-matroid, and exploring their
connections to previously studied notions and invariants arising from
combinatorics. For example, Bouchet showed in~\cite{Bouchet3} that for
any graph $G = (V,E)$, the subsets $I \subset V$ for which the induced
subgraph on $I$ has a perfect matching, comprise a delta-matroid. Now by
Corollary~\ref{Cdelta}, the monomials with nonzero coefficients in
$p_{\mathcal{A}_{G,\lambda}}$ form the adjacency-blowup delta-matroid of
$G$ (independent of $\lambda \neq 0$). Notice that these constructions
agree whenever $G$ is a tree. It is thus natural to ask if these two
constructions agree in general; but even for small graphs $G$, this is
not the case. For example, for $G = C_5$, the subset $V(G)$ cannot have a
perfect matching, yet occurs in the blowup delta-matroid
$\mathcal{M}_{\mathcal{A}_{C_5,\lambda}}$
(defined in the paragraph after Corollary~\ref{Cdelta});
and for $G = C_4$, the `reverse' holds: $V(G)$ has a perfect matching,
but does not occur in $\mathcal{M}_{\mathcal{A}_{C_4,\lambda}}$ (e.g.\ by
Proposition~\ref{Pblowup}, since $C_4$ is a blowup).

\subsection*{Acknowledgments}
P.N.~Choudhury was supported by National Post-Doctoral Fellowship (NPDF)
PDF/2019/000275 from SERB (Govt.~of India).
A.~Khare was partially supported by
Ramanujan Fellowship grant SB/S2/RJN-121/2017,
MATRICS grant MTR/2017/000295, and
SwarnaJayanti Fellowship grants SB/SJF/2019-20/14 and DST/SJF/MS/2019/3
from SERB and DST (Govt.~of India),
and by grant F.510/25/CAS-II/2018(SAP-I) from UGC (Govt.~of India).




\end{document}